\numberwithin{equation}{section}
\newtheorem{theorem}{Theorem}[section]
\newtheorem{claim}[theorem]{Claim}
\newtheorem{lemma}[theorem]{Lemma}
\theoremstyle{definition}
\newtheorem{definition}[theorem]{Definition}
\newtheorem{fact}[theorem]{Fact}
\newtheorem{discussion}[theorem]{Discussion}
\theoremstyle{remark}
\newtheorem{remark}[theorem]{Remark}
\newtheorem{notation}[theorem]{Notation}
\newcommand{\lh}{{\ell g}}
\newcommand{\rest}{{\restriction}}
\newcommand{\Ker}{{\rm Ker}}
\newcommand{\otp}{{\rm otp}}
\newcommand{\pure}{{\rm pure}}
\newcommand{\id}{{\rm id}}
\newcommand{\Ext}{{\rm Ext}}
\newcommand{\cf}{{\rm cf}}
\newcommand{\Hom}{{\rm Hom}}
\newcommand{\Rang}{\rm{Im}}
\newcommand{\mn}{{\medskip\noindent}}
\newcommand{\sn}{{\smallskip\noindent}}
\newcommand{\bbZ}{{\mathbb Z}}
\newcommand{\cT}{{\mathcal T}}
\newcommand{\varp}{{\varepsilon}}
\newcommand{\pr}{{\rm pr}}
\def\mathunderaccent#1#2 {\let\theaccent#1\skewfactor#2
\mathpalette\putaccentunder}
\def\putaccentunder#1#2{\oalign{$#1#2$\crcr\hidewidth
\vbox to.2ex{\hbox{$#1\skew\skewfactor\theaccent{}$}\vss}\hidewidth}}
\newenvironment{PROOF}[2][\proofname.]
   {\begin{proof}[#1]}
   {\end{proof}}
\begin{document}

\title {Failure of singular compactness for $\Hom$}
\author[M. Asgharzadeh]{Mohsen Asgharzadeh}

\address{Mohsen Asgharzadeh, Hakimiyeh, Tehran, Iran.}

\email{mohsenasgharzadeh@gmail.com}

\author[M.  Golshani]{Mohammad Golshani}

\address{Mohammad Golshani, School of Mathematics, Institute for Research in Fundamental Sciences (IPM), P.O.\ Box:
	19395--5746, Tehran, Iran.}

\email{golshani.m@gmail.com}

\author[S. Shelah]{Saharon Shelah}

\address{Saharon Shelah, Einstein Institute of Mathematics, The Hebrew University of Jerusalem, Jerusalem,
	91904, Israel, and Department of Mathematics, Rutgers University, New Brunswick, NJ
	08854, USA.}

\email{shelah@math.huji.ac.il}
\thanks{The second author's research has been supported by a grant from IPM (No. 1403030417). The third author research partially supported by the Israel Science Foundation
(ISF) grant no: 1838/19, and Israel Science Foundation (ISF) grant no: 2320/23; Research partially
supported by the grant “Independent Theories” NSF-BSF, (BSF 3013005232). The third author is grateful to Craig Falls for providing typing services that were used during the work on the paper. This is publication
number 1264 of the third author.}

 %First Typed - 02/June/27

 %formerly F487, 2002

\subjclass[2010]{Primary: 03E75, 20A15, Secondary: 20K20; 20K40}

\keywords {Abelian groups; almost-free; singular compactness; trivial duality; set theoretical methods in algebra; relative trees.}

\begin{abstract}
	Assuming G\"odel's axiom of constructibility $\mathbf{V}=\mathbf{L}$,  we  construct a $\chi$-free abelian group
	$G$ of singular cardinality  for some suitable   cardinal $\chi$ which is regular and uncountable, equipped with the property that for every nontrivial subgroup $G' \subseteq G$ of smaller cardinality,
	$\Hom(G',\mathbb{Z}) \neq 0$, while $\Hom(G,\mathbb{Z}) = 0$. This provides a consistent counterexample to the singular compactness
	of nontrivial duality with respect to the functor $\Hom(-,\mathbb{Z})$.
\end{abstract}

\maketitle
\numberwithin{equation}{section}
\setcounter{section}{-1}

\section{introduction}
Hill \cite{HIL} proved that if an abelian group \(G\) has  cardinality a singular cardinal with cofinality at most \( \omega_{1} \), and every subgroup of \(G\)  of smaller cardinality is free, then \(G\) itself is free. This result forms one of the foundations of Shelah's \emph{Singular Compactness Theorem} \cite{Sh:52}, in which he introduced an abstract notion of freeness and removed the restriction on cofinality. In particular, Shelah showed that if an abelian group \(G\) is of size a singular  cardinal  $\lambda$, and if every subgroup of \(G\) of cardinality \(<\lambda\) is free, then \(G\) must be free. General references on singular compactness may be found in \cite{EM02, fuchs}, while applications are discussed in \cite{GT}.

Compactness principles, and the corresponding phenomena of incompactness, occupy a central place in modern set-theoretic algebra. Broadly, a compactness principle asserts that if every small subobject of a given object satisfies a property \( \Pr \), then the object itself satisfies \( \Pr \). At the ASL Annual Meeting in Irvine (2008), Shelah \cite{sh:n} announced several theorems concerning singular incompactness, although these results were not published.
The aim of this paper is to provide a systematic analysis of incompactness for abelian groups with respect to the duality functor
\(
\Hom(-,\mathbb{Z}),
\)
focusing on singular cardinals. In particular, we verify a prediction from \cite{sh:n}. The property of interest is:
\begin{enumerate}
	\item[\( \Pr_{\lambda} \):] Let \(G\) be a group of cardinality \( \lambda \). If every nontrivial subgroup \( G' \subseteq G \) of cardinality \( <\lambda \) satisfies \( \Hom(G',\mathbb{Z}) \neq 0 \), then \( \Hom(G,\mathbb{Z}) \neq 0 \).
\end{enumerate}

For \( \mu \leq \lambda \) we recall that
\(
S^{\lambda}_{\mu} = \{ \alpha < \lambda : \cf(\alpha) = \mu \}
\)
is stationary in \( \lambda \). Given a stationary set \(S \subseteq \lambda\), we write \( \diamondsuit_{S} \) for Jensen's diamond on \(S\) (see Definition~\ref{dia}). When \( \lambda > \aleph_{0} \) is regular and \( \diamondsuit_{S} \) holds for some stationary non-reflecting \( S \subseteq S^{\lambda}_{\aleph_{0}} \), it is known (see~{Fact}~\ref{ej}, recorded from  \cite{EM77}) that one may construct a \( \lambda \)-free abelian group \(G\) of cardinality \( \lambda \) such that
\(
\Hom(G,\mathbb{Z}) = 0.
\)
Any subgroup \(G'\) of cardinality \(<\lambda\) is then free, and hence
\(
\Hom(G',\mathbb{Z}) \neq 0.
\)
Thus \( \Pr_{\lambda} \) fails for such \(\lambda\). These assumptions entail that \( \lambda \) is not weakly compact, since weak compactness implies reflection of stationary sets, whereas we require a non-reflecting stationary subset. In fact, the non–weak compactness of \(\lambda\) is necessary for the property \( \Pr_{\lambda} \); see Fact~\ref{weapr}. However, the argument above does not extend to singular cardinals.

Our main contribution is to show that the failure of $\Pr_\lambda$ at singular
cardinals is consistent with G\"odel’s axiom of constructibility. Although the
main theorem is established in a more general setting, the following special
case suffices for the purposes of the introduction.

\begin{theorem}\label{mt}
	Assume $\mathbf{V}=\mathbf{L}$ and that:
	\begin{enumerate}
		\item[(a)] $\langle \lambda_i : i < \kappa \rangle$ is an increasing sequence of
		regular cardinals with limit $\lambda$;
		\item[(b)] each $\lambda_i=\mu_i^{+}$ is a successor cardinal, where $\mu_i$ is
		regular and not weakly compact;
		\item[(c)] $\aleph_{0} < \kappa \leq \chi < \lambda_{0}$, where $\kappa$ and
		$\chi$ are regular.
	\end{enumerate}
	Then there exists a $\chi$-free abelian group $G$ of cardinality $\lambda$
	which is a counterexample to singular compactness at $\lambda$ for the
	property $\Hom(-,\mathbb Z)\neq 0$.
\end{theorem}
\medskip
\noindent
In fact, the assumptions of Theorem~\ref{mt} can be weakened as follows:

\begin{enumerate}
	\item[(a)] $\langle \lambda_i : i < \kappa \rangle$ is an increasing sequence of
	regular cardinals with limit $\lambda$, with each $\lambda_i=\mu_i^{+}$;
	\item[(b)] $\aleph_{0} < \kappa \leq \chi < \lambda_{0}$, where $\kappa$ and
	$\chi$ are regular;
	\item[(c)] for each $i < \kappa$, there exist stationary, non-reflecting sets
	$S_i \subseteq S^{\lambda_i}_{\aleph_{0}}$ and
	$T_i \subseteq S^{\mu_i}_{\aleph_{0}}$ such that $\diamondsuit_{S_i}$ and
	$\diamondsuit_{T_i}$ hold;
	\item[(d)] there is no measurable cardinal $\leq \lambda$.
\end{enumerate}
\medskip

Our investigation is closely related to the classical Whitehead problem, concerning the vanishing of
\(\Ext(G,\mathbb{Z}):=\Ext_{\mathbb{Z}}^1(G,\mathbb{Z})
,
\)
a property of considerable interest but greater complexity. In our forthcoming work \cite{ags2}, we study singular compactness in the context of \( \Ext \). Under the additional assumption \( \mathbf{V} = \mathbf{L} \), the situation becomes simpler: by \cite{Sh:44}, an abelian group \(G\) of cardinality \( \lambda > \aleph_{0} \) is free if and only if
\(
\Ext(G,\mathbb{Z}) = 0.
\)
Hence, by Shelah's singular compactness theorem for free groups \cite{Sh:52}, singular compactness also holds for the property
\(
\Ext(-,\mathbb{Z}) = 0.
\)

Throughout the paper, all groups are abelian. For standard terminology from set-theoretic algebra, we refer to Eklof–Mekler \cite{EM02} and G\"{o}bel–Trlifaj \cite{GT}. Background from group theory may be found in Fuchs \cite{fuchs}.
\medskip

\section {Preliminaries}

\medskip
\noindent
In this section we establish the notation and conventions used throughout the paper, and recall several basic facts from set-theoretic algebra.  All groups are abelian and written additively.

\medskip

For abelian groups \(G\) and \(H\), we write
\(
\Hom(G,H) := \Hom_{\mathbb{Z}}(G,H).
\)

\begin{notation}\label{nzu}
	Let $u$ be an index set, and define
	\(
	\mathbb{Z}_{[u]} := \bigoplus_{\alpha \in u} \mathbb{Z} x_\alpha,
	\)
	so that $\langle x_\alpha : \alpha \in u \rangle$ is a basis for $\mathbb{Z}_{[u]}$.
\end{notation}

\begin{definition}
\label{def1} Let  $\kappa$   be an infinite  cardinal and    ${G}$ be an abelian group.
\begin{itemize}
\item[(a)] The group   ${G}$ is called $\kappa$-free if every subgroup of ${G}$ of cardinality
less than $\kappa$ is free.

\item[(b)] The group  ${G}$ is said to be strongly $\kappa$-free if it is $\kappa$-free and in addition
every subset of ${G}$ of cardinality   $<\kappa$  is contained in a subgroup $H$ of ${G}$ of
cardinality  $<\kappa$  such that ${G}/H$ is $\kappa$-free.
\end{itemize}
 \end{definition}
\begin{remark}
 We note that
if $\kappa$ is  a singular cardinal, then   $G$ is $\kappa$-free if and only if $G$ is $\lambda$-free
for every regular cardinal $\lambda < \kappa.$
\end{remark}

  Recall that a ring \(R\) is said \emph{left-perfect} if every flat left \(R\)-module is projective. Since \(\mathbb{Q}\) is flat but not projective, it follows that \(\mathbb{Z}\) is not left-perfect. Moreover, the following result shows that for a weakly compact cardinal \(\kappa\), every \(\kappa\)-free group of cardinality \(\kappa\) is free.

  \begin{fact}\label{we}
  	(See \cite[Theorems VI.3.2 and VII.1.4]{EM02}.)
  	Let \(\lambda\) be a weakly compact cardinal and let $R$ be a  non-left-perfect ring. If \(M\) is a \(\leq\!\lambda\)-generated module which is \(\lambda\)-free, then \(M\) is free.
  \end{fact}

  We now verify that \( \Pr_{\lambda} \) holds whenever \(\lambda\) is weakly compact.

  \begin{fact}\label{weapr}
  	Assume \(\lambda\) is a weakly compact cardinal and \(G\) is a group of size \(\lambda\) such that
  	\(
  	\Hom(G',\mathbb{Z}) \neq 0
  	\)
  	for every nontrivial subgroup \(G' \subseteq G\) of cardinality \(<\lambda\). Then \(\Hom(G,\mathbb{Z}) \neq 0\). In particular, \( \Pr_{\lambda} \) holds.
  \end{fact}

  \begin{proof}
  	Let \(\theta > \lambda\) be sufficiently large and regular, and let \(M\) be an elementary submodel of \((\mathcal{H}(\theta),\in)\) of cardinality \(\lambda\) such that
  	\[
  	M^{<\lambda} \subseteq M
  	\qquad\text{and}\qquad
  	\lambda,\, G \in M.
  	\]
  	Since \(\lambda\) is weakly compact, there exists a model \(N\) of cardinality \(\lambda\) with \(N^{<\lambda} \subseteq N\) and an elementary embedding \(j : M \to N\) such that
  	\[
  	j,\, M \in N,\qquad
  	j(\alpha) = \alpha \text{ for all } \alpha < \lambda,\qquad
  	j(\lambda) > \lambda.
  	\]
  	
  	In \(N\), the group \(j(G)\) has cardinality \(j(\lambda)\), and \(G\) is a subgroup of \(j(G)\) of size \(\lambda < j(\lambda)\). By elementarity,
  	\[
  	N \models \text{``}\Hom(G,\mathbb{Z}) \neq 0\text{''}.
  	\]
  	This implies \(\Hom(G,\mathbb{Z}) \neq 0\) in the ambient universe, and the claim follows.
  \end{proof}

\begin{definition}\label{dia}
	Suppose $\lambda > \mu \geq \aleph_0$ are regular cardinals, and let $S \subseteq \lambda$ be stationary.
	
	\begin{enumerate}
		\item \emph{Jensen's diamond $\diamondsuit_\lambda(S)$} asserts the existence of a sequence
		$(S_\alpha \mid \alpha \in S)$ such that for every $X \subseteq \lambda$, the set
		\[
		\{\alpha \in S \mid X \cap \alpha = S_\alpha\}
		\]
		is stationary.
		
		\item A useful consequence of $\diamondsuit_\lambda(S)$ is the following.
		Let $A = \bigcup_{\alpha<\lambda} A_\alpha$ and $B = \bigcup_{\alpha<\lambda} B_\alpha$ be two $\lambda$-filtrations with $|A_\alpha|, |B_\alpha| < \lambda$.
		Then there exists a sequence of functions
		\(
		(g_\alpha : A_\alpha \to B_\alpha \mid \alpha < \lambda)
		\)
		such that, for any function $g: A \to B$, the set
		\(
		\{\alpha \in S \mid g\!\!\restriction_{A_\alpha} = g_\alpha\}
		\)
		is stationary in $\lambda$.
		
		\item $S$ is \emph{non-reflecting} if for every limit ordinal $\delta < \lambda$ of uncountable cofinality, the set $S \cap \delta$ is non-stationary in $\delta$.
		
		\item We set
		\(
		S^\lambda_\mu := \{\alpha < \lambda \mid \cf(\alpha) = \mu\}.
		\)
	\end{enumerate}
\end{definition}

\begin{definition}
\label{e22}
Let $\mathcal{K}$ be the class of all uncountable cardinals $\mu   $ so that if $K_1\subseteq \mathbb{Z}$ is a subgroup, then  there is $ (H_{K_1},\eta_{K_1})$ equipped with the following properties:

\begin{enumerate}
\item[$(\alpha)$]  $H_{ K_1}$ is an abelian group extending $ \mathbb{Z} _{[\mu ]}$,

\item[$(\beta)$]  $H_{ K_1}/ \mathbb{Z}_{[\mu ]}$ is $\mu $-free,

\item[$(\gamma)$]  $\eta_{ K_1} \in {}^{\mu }(K_1)$,

\item[$(\delta)$]  there is no homomorphism $f:H_{ K_1}\to K_1$
  such that    $f(x_\alpha) =
  \eta_{ K_1}(\alpha)$ for $ \alpha < \mu $:
	$$\xymatrix{
&\mathbb{Z}_{[\{\alpha\}]}\ar[r]^{\subseteq}\ar[d]_{ \tilde{\eta}}&\mathbb{Z}_{[\mu ]}\ar[r]^{\subseteq}&\ar[dll]^{\nexists f}H_{ K_1}\\
 & K_1
	&&&}$$
where $\tilde{\eta}(x_\alpha):=\eta_{ K_1}(\alpha)$.

\end{enumerate}
\end{definition}

We now give sufficient conditions for a cardinal $\mu$ being in $\mathcal{K}$.
We first recall the following result of Eklof--Mekler, see~\cite{EM77}.

\begin{fact}\label{ej}
	Let $\mu$ be an uncountable regular cardinal.
	Assume that $S\subseteq S^\mu_{\aleph_0}$ is stationary and non-reflecting,
	and that $\diamondsuit_S$ holds.
	Then there exists an indecomposable, strongly $\mu$-free abelian group $H$
	of cardinality $\mu$.
\end{fact}

\begin{lemma}\label{e31}
	We have  $\mu\in\mathcal{K}$
  provided that:
	\begin{enumerate}
		\item[(a)] $\mu=\cf(\mu)>\aleph_0$;
		\item[(b)] $S\subseteq S^\mu_{\aleph_0}$ is stationary and non-reflecting;
		\item[(c)] $\diamondsuit_S$ holds.
	\end{enumerate}
\end{lemma}

\begin{proof}
	By Fact~\ref{ej}, there exists an indecomposable, strongly $\mu$-free abelian
	group $H$ of cardinality $\mu$. We first note that
	$\Hom(H,\mathbb Z)=0$. Indeed, if $0\neq f\in\Hom(H,\mathbb Z)$, then
	$\Rang(f)\cong\mathbb Z$ is free, so the exact sequence
	\[
	0\longrightarrow\ker(f)\longrightarrow H\longrightarrow\Rang(f)\longrightarrow 0
	\]
	splits. Since $H$ is indecomposable, this forces $\ker(f)=0$, hence $H$ embeds
	into $\mathbb Z$, contradicting $|H|=\mu>\aleph_0$.
	As $H$ is strongly $\mu$-free, there exists a subgroup
	$K\subseteq H$ of cardinality $<\mu$ such that $\mathbb Z\subseteq K$ and
	$H/K$ is $\mu$-free. Without loss of generality, write
	$K=\bigoplus_{i<\theta}\mathbb Z$ for some $\theta<\mu$.
	Fix a partition $\mu=\bigcup_{i<\mu} I_i$ such that the $I_i$ are pairwise
	disjoint, $|I_i|=\theta$ for all $i<\mu$, and $I_i<I_j$ whenever $i<j$.
	Define
	\[
	G:=\bigoplus_{i<\mu} G_i,
	\]
	where each $G_i\cong H$ and $G_i/\mathbb Z_{I_i}$ is $\mu$-free.
	Since $H$ is torsion-free, we have $\mathbb Z\subseteq H$, and hence
	\[
	\mathbb Z_{[\mu]}\subseteq \bigoplus_{i<\mu} G_i = G.
	\]
	
	Moreover,
	\[
	\Hom(G,\mathbb Z)
	\cong \Hom\Bigl(\bigoplus_{i<\mu}G_i,\mathbb Z\Bigr)
	\cong \prod_{i<\mu}\Hom(H,\mathbb Z)=0.
	\]
	
	We next verify that $G$ is $\mu$-free. Let $L\leq G$ with $|L|<\mu$.
	There exists $I\subseteq\mu$ with $|I|<\mu$ and subgroups
	$L_i\leq G_i$, each of size $<\mu$, such that
	$L\subseteq\bigoplus_{i\in I} L_i$.
	Since each $L_i$ is free, so is $\bigoplus_{i\in I} L_i$, and hence $L$ is free.
	
	Finally, note that
	\(
	G/\mathbb Z_{[\mu]}\cong\bigoplus_{i<\mu} G_i/\mathbb Z_{I_i},
	\)
	and each summand on the right-hand side is $\mu$-free. Thus
	$G/\mathbb Z_{[\mu]}$ is $\mu$-free.
	Therefore  $\mu\in\mathcal{K}$.
	For any $K_1\subseteq\mathbb Z$, set
	$(H_{ K_1},\eta_{ ,K_1})=(G,\eta)$,
	where $\eta\colon\mu\to K_1$ is arbitrary. If $K_1\neq 0$, then
	$K_1\cong\mathbb Z$, and hence
	\(
	\Hom(G,K_1)=0,
	\)
	which verifies $(\delta)$ from Definition~\ref{e22}.
\end{proof}
We also need the following well-known result of Kurepa.

\begin{fact}\label{kurepa}
	Assume $\cf(\lambda)>\aleph_0$ and let $\mathcal{T}$ be a tree of height $\lambda$ whose levels are all finite. Then $\mathcal{T}$ has a cofinal branch.
\end{fact}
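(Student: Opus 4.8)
The plan is to recast the existence of a cofinal branch as the non-emptiness of an inverse limit of finite sets, and then to obtain it by a compactness (generalized K\"{o}nig) argument. Write $L_\alpha$ for the $\alpha$-th level of $\cT$. Since $\cT$ has height $\lambda$ and every node of level $\beta$ has exactly one predecessor on each level $\alpha<\beta$, the levels $L_\alpha$ with $\alpha<\lambda$ are all nonempty (any node high up restricts down to each lower level), and by hypothesis each is finite. For $\alpha\le\beta<\lambda$ let $\pi_{\beta,\alpha}\colon L_\beta\to L_\alpha$ send a node to its predecessor on level $\alpha$; these maps are total and coherent, $\pi_{\gamma,\alpha}=\pi_{\beta,\alpha}\circ\pi_{\gamma,\beta}$ for $\alpha\le\beta\le\gamma$. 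A cofinal branch is then exactly a thread $\langle b_\alpha:\alpha<\lambda\rangle$ with $b_\alpha\in L_\alpha$ and $\pi_{\beta,\alpha}(b_\beta)=b_\alpha$; so it suffices to prove that $\varprojlim_{\alpha<\lambda}(L_\alpha,\pi_{\beta,\alpha})\ne\emptyset$.

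To produce the thread I would invoke compactness. Give each finite set $L_\alpha$ the discrete topology; then $P:=\prod_{\alpha<\lambda}L_\alpha$ is compact by Tychonoff, and for $\alpha\le\beta$ the set $C_{\beta,\alpha}:=\{x\in P:\pi_{\beta,\alpha}(x_\beta)=x_\alpha\}$ is clopen, since it depends only on the two coordinates $\alpha,\beta$. The inverse limit is $\bigcap_{\alpha\le\beta<\lambda}C_{\beta,\alpha}$, so it is enough to verify the finite intersection property. Given finitely many constraints, let $\gamma<\lambda$ bound all the indices occurring (possible as $\lambda$ is a limit ordinal), pick any $g\in L_\gamma$, and put $x_\alpha:=\pi_{\gamma,\alpha}(g)$ for $\alpha\le\gamma$ (and arbitrary above $\gamma$); coherence of the $\pi$'s makes $x$ satisfy every chosen constraint. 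By compactness the whole intersection is nonempty, and any point of it is the sought cofinal branch.

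The step I expect to be the crux is the passage through limit levels, and the point of the argument is to handle it globally rather than inductively. A naive stage-by-stage construction, following (say) a chain of nodes whose subtrees reach cofinally high levels and choosing at each successor an immediate successor with the same property --- which exists because the immediate successors form a finite set and a finite union of bounded subsets of $\lambda$ is bounded --- breaks down at a limit stage, where the chain built so far need not be bounded above by any node of $\cT$. Finiteness of the levels is precisely what licenses replacing this induction by the compactness of $\prod_\alpha L_\alpha$: without it the conclusion is false, as Aronszajn trees on $\aleph_1$ (regular uncountable height, countable levels, no cofinal branch) show. I would note that the hypothesis $\cf(\lambda)>\aleph_0$ enters only in guaranteeing that $\lambda$ is a limit ordinal; the compactness argument then applies to any such $\lambda$, so it comfortably covers the stated case.
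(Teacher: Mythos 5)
The paper offers no proof of Fact \ref{kurepa}: it is cited as a well-known theorem of Kurepa and invoked only once, at the end of Stage D in the proof of Theorem \ref{e25}, so there is no in-house argument to compare yours against; judged on its own terms, your proposal is a correct and complete proof. The reduction of ``cofinal branch'' to ``thread of the inverse system $(L_\alpha,\pi_{\beta,\alpha})_{\alpha\le\beta<\lambda}$'' is sound: in a tree every node of level $\beta$ has a unique predecessor at each level $\alpha<\beta$, the maps $\pi_{\beta,\alpha}$ are coherent, all levels below the height are nonempty, and a thread is a chain meeting every level which is moreover downward closed (since $\pi_{\beta,\alpha}(b_\beta)=b_\alpha$), hence is itself a cofinal branch. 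The Tychonoff-plus-finite-intersection-property argument is the standard proof that an inverse limit of nonempty finite sets over a directed index set is nonempty, and your verification of the FIP (bound the finitely many indices by some $\gamma<\lambda$, then push a single node of $L_\gamma$ down via the coherent projections) is exactly right; note the surjectivity of the $\pi_{\beta,\alpha}$ is never needed. Your closing observation is also correct and worth keeping: the hypothesis $\cf(\lambda)>\aleph_0$ enters only through ``$\lambda$ is a limit ordinal,'' so your argument proves the stronger classical form of Kurepa's theorem (any tree of limit height with finite levels has a cofinal branch), of which the paper's statement is just the instance it needs --- in Stage D the fact is applied to the tree formed by the finite sets $u_i$ for $i\in\cV_1$, whose height has cofinality $\kappa>\aleph_0$. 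Finally, the contrast you draw with Aronszajn trees correctly identifies finiteness of the levels, not the cofinality restriction, as the essential hypothesis.
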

\medskip
\section {Controlling $\Hom(G,\bbZ)$}

In this section we prove our main result (see Theorem~\ref{e25}).
\begin{discussion}
	Recall that a cardinal $\kappa$ is \emph{measurable} if it is uncountable and there exists a non-principal $\kappa$-complete ultrafilter $\mathcal{D}$ on $\kappa$, meaning that for every subset $S \subseteq \mathcal{D}$ of cardinality less than $\kappa$, the intersection $\bigcap S$ belongs to $\mathcal{D}$. It is a classical result that the existence of measurable cardinals cannot be established within ZFC.
\end{discussion}

\begin{definition}
	Let $G$ be an abelian group. The \emph{dual} of $G$ is the abelian group $\Hom(G, \mathbb{Z})$, denoted by $G^\ast$. For $g \in G$, define $\psi_g : G^\ast \to \mathbb{Z}$ by evaluation
	\[
	\psi_g(f) := f(g), \quad f \in G^\ast.
	\]
	The assignment $g \mapsto \psi_g$ defines a canonical map $\psi : G \to G^{\ast\ast}$. We say that $G$ is \emph{reflexive} if $\psi$ is an isomorphism.
\end{definition}

\begin{fact}(L\"{o}s--Eda, Shelah; see \cite[Corollary III.1.5]{EM02} and \cite{sh:904}).\label{los}
	Let $\mu = \mu_{\mathrm{first}}$ be the first measurable cardinal. The following hold:
	\begin{enumerate}
		\item[(a)] For any $\theta < \mu$, the group $\mathbb{Z}^{(\theta)}$ is reflexive; in fact, its dual is $\mathbb{Z}^{\theta}$.
		\item[(b)] For any $\lambda \ge \mu$, the group $\mathbb{Z}^{(\lambda)}$ is not reflexive.
		\item[(c)] There exists a reflexive group $G \subset \mathbb{Z}^{\mu}$ of cardinality $\mu$.
	\end{enumerate}
\end{fact}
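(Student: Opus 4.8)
The plan is to reduce all three parts to a single computation, namely the dual of a product $\Hom(\mathbb{Z}^I,\mathbb{Z})$. For a free group $\mathbb{Z}^{(\theta)}=\bigoplus_{\alpha<\theta}\mathbb{Z}$ one has the elementary identification $(\mathbb{Z}^{(\theta)})^\ast\cong\mathbb{Z}^\theta$, since a homomorphism out of a direct sum is determined by its values on the canonical basis and those values may be prescribed arbitrarily; this already yields the second clause of (a). Consequently $(\mathbb{Z}^{(\theta)})^{\ast\ast}\cong(\mathbb{Z}^\theta)^\ast=\Hom(\mathbb{Z}^\theta,\mathbb{Z})$, so everything hinges on this last group. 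The controlling mechanism is slenderness of $\mathbb{Z}$ together with the L\"{o}s--Eda theorem: for a nonprincipal countably complete ($\aleph_1$-complete) ultrafilter $\mathcal D$ on $I$ and any $x=(x_i)_{i\in I}\in\mathbb{Z}^I$, the ``$\mathcal D$-limit'' $\lim_{\mathcal D}x_i$ is a well-defined integer, because countable completeness forces $\{i:x_i=n\}\in\mathcal D$ for a unique $n$ (as $\mathbb{Z}=\bigcup_n\{n\}$ is countable); the map $x\mapsto\lim_{\mathcal D}x_i$ is a homomorphism $\mathbb{Z}^I\to\mathbb{Z}$, it reduces to the coordinate projection $\pi_{i_0}$ when $\mathcal D$ is principal at $i_0$, and such limits together with the finitely supported functionals exhaust $\Hom(\mathbb{Z}^I,\mathbb{Z})$.

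For part (a), assume $\theta<\mu$. Then $\theta$ carries no nonprincipal countably complete ultrafilter (this is exactly what it means to lie below the first measurable, by Ulam's classical analysis), so every $\mathcal D$-limit is a coordinate projection and $\Hom(\mathbb{Z}^\theta,\mathbb{Z})=\mathbb{Z}^{(\theta)}$. Tracing the identifications above, the canonical map $\psi$ sends the $\alpha$-th basis vector of $\mathbb{Z}^{(\theta)}$ to the $\alpha$-th coordinate projection, which is precisely this isomorphism; hence $\psi$ is an isomorphism and $\mathbb{Z}^{(\theta)}$ is reflexive, with dual $\mathbb{Z}^\theta$. For part (b), fix $\lambda\ge\mu$ and choose a nonprincipal countably complete ultrafilter $\mathcal D$ on a measurable cardinal $\le\lambda$, pushed forward to $\lambda$ along the inclusion. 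The associated limit $f_{\mathcal D}\in\Hom(\mathbb{Z}^\lambda,\mathbb{Z})=(\mathbb{Z}^{(\lambda)})^{\ast\ast}$ kills every basis vector (since $\mathcal D$ is nonprincipal) yet is nonzero (it sends the constant sequence $(1,1,\dots)$ to $1$). The image of $\psi$ consists exactly of the finite combinations of coordinate projections, i.e. the finitely supported functionals, so $f_{\mathcal D}$ is not in that image and $\psi$ fails to be surjective; thus $\mathbb{Z}^{(\lambda)}$ is not reflexive.

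Part (c) is the substantial point, due to Shelah, and is where I expect the main obstacle to lie: one must produce a group strictly intermediate between $\mathbb{Z}^{(\mu)}$ and $\mathbb{Z}^\mu$ that is stable under double dualization while having cardinality only $\mu$ (the full product $\mathbb{Z}^\mu$, which is itself reflexive by the $\theta=\omega$-type argument, has size $2^\mu$). The plan is a transfinite construction of length $\mu$ of an increasing continuous chain $\mathbb{Z}^{(\mu)}=G_0\subseteq G_\alpha\subseteq\mathbb{Z}^\mu$ with $|G_\alpha|\le\mu$ at each stage: one uses a fixed $\mu$-complete nonprincipal ultrafilter on $\mu$ to bound the size of the evolving dual $G_\alpha^\ast$ by $\mu$, and one adjoins, for each element of the emerging double dual not yet represented by an evaluation, a witnessing element of $\mathbb{Z}^\mu$, while arranging that adjoining witnesses does not create uncontrolled new functionals. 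At the union $G=\bigcup_{\alpha<\mu}G_\alpha$ one checks that $\psi_G$ is onto; injectivity is automatic, since the coordinate projections lie in $G^\ast$ and separate the points of $G\subseteq\mathbb{Z}^\mu$.

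The delicate part of this last construction is the simultaneous bookkeeping: one must balance the growth of $G$ against the growth of $G^\ast$ so that both stabilize at size $\mu$ and double dualization returns precisely to $G$, forcing every functional on $G^\ast$ to be an evaluation at a genuine element of $G$. This closure-under-witnesses argument, and the verification that the measurability of $\mu$ (equivalently, the $\mu$-completeness of the chosen ultrafilter) is exactly what keeps the dual from escaping size $\mu$, is the crux and the part I would expect to require the full strength of Shelah's method rather than a short routine argument.
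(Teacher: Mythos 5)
There is a genuine gap. First, note that the paper itself does not prove this Fact at all: it is quoted as known, with parts (a),(b) going back to \L o\'s--Eda as presented in \cite{EM02} and part (c) being the main theorem of \cite{sh:904}; the Fact serves as background for hypothesis (e) of Theorem \ref{e25}. Your treatment of (a) and (b) is correct and is essentially the standard argument: the identification $(\mathbb{Z}^{(\theta)})^{\ast}\cong\mathbb{Z}^{\theta}$, the \L o\'s slenderness theorem giving $\Hom(\mathbb{Z}^{\theta},\mathbb{Z})=\mathbb{Z}^{(\theta)}$ when $\theta$ carries no nonprincipal countably complete ultrafilter, the computation that $\psi$ sends basis vectors to coordinate projections, and for (b) the observation that a $\mathcal D$-limit functional kills every basis vector while being nonzero, hence cannot lie in the image of $\psi$. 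All of this is sound.

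Part (c), however, is not proved: what you give is a plan plus an honest admission that the crux is missing, and the crux is exactly the content of Shelah's paper \cite{sh:904} --- a substantial construction, not a routine closure argument. Concretely, the step ``one uses a fixed $\mu$-complete nonprincipal ultrafilter on $\mu$ to bound the size of the evolving dual $G_\alpha^{\ast}$ by $\mu$'' is unsupported; for a group $\mathbb{Z}^{(\mu)}\subseteq G\subseteq\mathbb{Z}^{\mu}$ the restriction map $G^{\ast}\to(\mathbb{Z}^{(\mu)})^{\ast}$ is not injective (ultrafilter limits vanish on $\mathbb{Z}^{(\mu)}$), so no cheap cardinality bound on $G_\alpha^{\ast}$ follows, and arranging that the chain stabilizes with $G^{\ast\ast}=\psi(G)$ is precisely the hard part. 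Moreover, your parenthetical claim that ``the full product $\mathbb{Z}^{\mu}$ is itself reflexive by the $\theta=\omega$-type argument'' is false when $\mu$ is measurable: by Eda's theorem $\Hom(\mathbb{Z}^{\mu},\mathbb{Z})$ is free on the countably complete ultrafilter limits, of which there are now nonprincipal ones, and one checks (e.g.\ by evaluating at basis vectors) that the functional on $(\mathbb{Z}^{\mu})^{\ast}$ taking value $1$ on a nonprincipal limit and $0$ on all projections is not an evaluation $\psi(x)$; so $\mathbb{Z}^{\mu}$ fails reflexivity exactly at measurable $\mu$. This error matters, because it is the same mechanism that makes the construction of a reflexive $G\subseteq\mathbb{Z}^{\mu}$ of size $\mu$ delicate: one must prevent the nonprincipal ultrafilter limits from producing elements of $G^{\ast\ast}$ outside $\psi(G)$, and your sketch does not address this.
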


Let $\Pr$ be any property of abelian groups, and let $\lambda$ be a cardinal. Recall that \emph{compactness for $(\lambda, \Pr)$} means that if $G$ is a group of cardinality $\lambda$ and
\[
\text{``for all } G' \subseteq G \text{ with } |G'| < \lambda, \text{ $G'$ has $\Pr$''},
\]
then $G$ itself has $\Pr$. In this paper, we focus on the following specific property of abelian groups:

\begin{notation}
	For a cardinal $\lambda$, let $\Pr_\lambda$ denote the property:
If $G$ is a group of size $\lambda$, and if for any nontrivial subgroup $G' \subseteq G$ of size less than $\lambda,$ $\Hom(G',\bbZ) \ne 0$, then $\Hom(G,\mathbb{Z})\neq0$.

\end{notation}

We now turn to the primary framework for our construction.

\begin{definition}\label{e11}
Let $\theta$ be a cardinal.	\begin{enumerate}
		\item Let $\mathbf M_{1, \theta}$ be the class of objects
		\[
		\mathbf m = \big(\lambda_{\mathbf m}, \langle G^{\mathbf m}_\alpha : \alpha \le \alpha_{\mathbf m} \rangle, S_{\mathbf m} ,\langle f_{\mathbf m,s} : s \in S_{\mathbf m} \rangle \big)
		\]
		consisting of:
		\begin{enumerate}
			\item[$(a)$]
			\begin{enumerate}
				\item[$(\alpha)$] $\lambda_{\mathbf m} = \cf(\lambda_{\mathbf m}) > \aleph_0$,
				\item[$(\beta)$] $\lambda_{\mathbf m} \ge \alpha_{\mathbf m} := \lh(\mathbf m)$, where $\lh(\mathbf m)$ denotes the length of $\mathbf m$.
			\end{enumerate}
			
			\item[$(b)$]
			\begin{enumerate}
				\item[$(\alpha)$] $\langle G^{\mathbf m}_\alpha : \alpha \le \alpha_{\mathbf m} \rangle$ is an increasing and continuous sequence of abelian groups,
				\item[$(\beta)$] $|G^{\mathbf m}_\alpha| < \lambda_{\mathbf m}$ for $\alpha < \alpha_{\mathbf m}$.
			\end{enumerate}
			
			\item[$(c)$] $G^{\mathbf m}_\alpha / G^{\mathbf m}_0$ is free.
			
			\item[$(d)$] The set
			\[
			\{\beta < \alpha_{\mathbf m} : G^{\mathbf m}_{\beta +1} / G^{\mathbf m}_\beta \text{ is not free} \}
			\]
			is non-reflecting and stationary.
			
			\item[$(e)$]
			\begin{enumerate}
				\item[$(\alpha)$] $S_{\mathbf m}$ is a set of cardinality $\le \theta$,
				\item[$(\beta)$] $f_{\mathbf m,s} \in \Hom(G^{\mathbf m}_{\alpha_{\mathbf m}}, \mathbb{Z})$ for each $s \in S_{\mathbf m}$.
			\end{enumerate}
			
			\item[$(f)$] The family $\langle f_{\mathbf m,s} : s \in S_{\mathbf m} \rangle$ is a free basis of a subgroup of $\Hom(G^{\mathbf m}_{\alpha_{\mathbf m}}, \mathbb{Z})$.
		\end{enumerate}
		
		\item The class $\mathbf M_{2, \theta}$ is defined as above, with the modification that in item (a)$(\beta)$ we set $\alpha_{\mathbf m} = \lambda_{\mathbf m}$, and we further require:
		\begin{enumerate}
			\item[(g)] For every $f \in \Hom(G^{\mathbf m}_{\alpha_{\mathbf m}}, \mathbb{Z})$, there exists $h \in \Hom(\prod_{S_{\mathbf m}} \mathbb{Z}, \mathbb{Z})$ such that
			\[
			f(x) = h(\langle f_{\mathbf m,s}(x) : s \in S_{\mathbf m} \rangle) \quad \text{for all } x \in G^{\mathbf m}_{\alpha_{\mathbf m}}.
			\]
		Namely, 	$h$ is a factorization of $f$ through
			\[
			\pi\in \Hom(G^{\mathbf m}_{\alpha_{\mathbf m}}, \mathbb{Z}^{S_{\mathbf m}}),
			\]
			defined by
			\(
			\pi(x) := \big( f_{\mathbf m,s}(x) \mid s \in S_{\mathbf m} \big).
			\)
			\item[(h)] The homomorphism   $\pi$
			 is   surjective.
			
			\item[(i)] For any nonzero subgroup $G' \subseteq G^{\mathbf m}_\alpha$ with $\alpha < \alpha_{\mathbf m}$, we have
			\(
			\Hom(G', \mathbb{Z}) \neq 0.
			\)
		\end{enumerate}
	\end{enumerate}
\end{definition}

We are now in a position to state and prove our main result:
\begin{theorem}
\label{e25}
Assume  that:
\begin{enumerate}
\item[(a)]  $\langle \lambda_i:i < \kappa\rangle$ is an
  increasing sequence of regular cardinals with limit $\lambda$,
and $\lambda_i=\mu_i^+$, for some regular cardinal $\mu_i$,
\item[(b)]  $\aleph_0 < \kappa  \leq \chi < \lambda_0$ and
$\kappa, \chi$ are regular,

\item[(c)]  $S_i \subseteq
  S^{\lambda_i}_{\aleph_0}$ is stationary and non-reflecting,
  and $\diamondsuit_{S_i}$ holds,

\item[(d)] $T_i \subseteq
  S^{\mu_i}_{\aleph_0}$ is stationary and non-reflecting,
  and $\diamondsuit_{T_i}$ holds,

\item[(e)] there is no measurable cardinal $\le \lambda$.

\end{enumerate}

Then there is a  $\chi$-free  abelian group $G$ of cardinality
$\lambda$ which is counterexample to singular compactness in
$\lambda$ for $\Pr_\lambda$.
\end{theorem}

 \begin{remark}
The sets  $T_i$ and  $S_i$   have
	cardinality $\mu_i$ and $\lambda_i$, respectively. In particular, they are different, though both of them have concentrated on ordinals
of countable cofinality.
 \end{remark}

\begin{proof}
We construct a $\chi$-free abelian group $G$ of cardinality $\lambda$ endowed with the following property:
for any nontrivial subgroup $G' \subseteq G$ of smaller cardinality, we have
\[
\Hom(G',\mathbb{Z}) \neq 0,
\]
while
$
\Hom(G,\mathbb{Z}) = 0.
$

The idea of the construction is to equip the cardinals $\lambda_i$ with an underlying tree structure $\mathcal{T}_i$, which allows precise control of homomorphisms from the building blocks $G_i$ of the group
\[
G := \bigcup_{i<\kappa} G_i.
\]

The proof is organized in four stages.
Stage (A) defines the tree structure $\mathcal{T}_i$.
Stage (B), which is more involved, constructs by induction a system of $\chi$-free groups together with homomorphisms from them to $\mathbb{Z}$.
Stage (C) verifies that subgroups of smaller cardinality have nontrivial duals, and
Stage (D) shows that there are no non-zero homomorphisms from $G$ to $\mathbb{Z}$.

\medskip
\underline{Stage (A)}:  We define a tree $\mathcal{T}$ of height $\kappa$, whose $i$-th level $\mathcal{T}_i$ is defined as follows:

\begin{enumerate}
	\item[$(*)^i_A:$] $\mathcal{T}_i$ is the set of all sequences $\eta$ satisfying:
	\begin{enumerate}
		\item[(a)] $\eta$ has length $i+1$,
		
		\item[(b)] for each $j \le i$, $\eta(j) = (\eta(j,1), \eta(j,2))$,
		
		\item[(c)] for each $j \le i$, $\eta(j,1) < \lambda_j$ and $\eta(j,2) < \kappa$,
		
		\item[(d)] if $j_1 < j_2 \le i$, then $\eta(j_1,1) \le \eta(j_2,1)$ and $\eta(j_1,2) \le \eta(j_2,2)$,
		
		\item[(e)] the range of $\eta$ is finite,
		
		\item[(f)] if $j_1 < j_2 \le i$ and the sequence $\langle \eta(j,1) : j \in [j_1,j_2] \rangle$ is constant, then $j_2 < \eta(j_1,2)$.
	\end{enumerate}
\end{enumerate}

Set $\mathcal{T} := \bigcup_{i<\kappa} \mathcal{T}_i$, ordered by the end-extension relation $\triangleleft$. Then $(\mathcal{T}, \triangleleft)$ is a tree of height $\kappa$, whose $i$-th level is $\mathcal{T}_i$. Moreover, if $\eta \in \mathcal{T}_i$ and $i < j < \kappa$, there exists $\nu \in \mathcal{T}_j$ such that $\eta \triangleleft \nu$; that is, $\eta = \nu \restriction (i+1)$.

For truncated trees, we define, for $\alpha \le \lambda_i$,
\[
\mathcal{T}_{i,\alpha} := \{ \eta \in \mathcal{T}_i : \eta(i,1) \le \alpha \}.
\]
In particular, $\mathcal{T}_i = \mathcal{T}_{i,\lambda_i}$.

\begin{claim}\label{nobranch}
	The tree $(\mathcal{T},\triangleleft)$ has no branches of length $\kappa$.
\end{claim}
	\begin{PROOF}{\ref{nobranch}}
	Assume, towards a contradiction, that there exists a branch
	\[
	b := \langle \eta_i : i<\kappa \rangle
	\]
	of $\mathcal{T}$. Then $\langle \eta_i : i<\kappa \rangle$ is $\triangleleft$-increasing. It follows that the sequence
	\[
	\langle \eta_i(i,1) : i<\kappa \rangle
	\]
	is non-decreasing in the ordinals.
	By clause $(*)^i_{A}(e)$, every initial segment takes only finitely many values, and since $\kappa = \cf(\kappa) > \aleph_0$, the sequence must eventually stabilize. That is, there exists $i_* < \kappa$ such that
	\[
	\eta_i(i,1) = \eta_{i_*}(i_*,1) \quad \text{for all } i \in [i_*,\kappa).
	\]
	
	On the one hand, by $(*)^i_{A}(f)$, we have
	\[
	\eta(i_*,2) > i \quad \text{for all } i<\kappa,
	\]
	while on the other hand, $\eta(i_*,2) < \kappa$. This is a contradiction.
\end{PROOF}
\noindent
\underline{Stage (B):}
We define $\mathbf m_i$ by induction on $i < \kappa$ such that:

\begin{enumerate}
	\item[$(*)^i_B:$] The sequence $\mathbf m_i$ satisfies:
	\begin{enumerate}
		\item[(a)] $\mathbf m_i = \bigl(\lambda_{\mathbf m_i}, \langle G^{\mathbf m_i}_\alpha : \alpha \le \alpha_{\mathbf m_i}\rangle, S_{\mathbf m_i},  \langle f_{\mathbf m_i,s} : s \in S_{\mathbf m_i} \rangle\bigr) \in \mathbf M_{1, \lambda_i}$,
		
		\item[(b)] $\lambda_{\mathbf m_i} = \alpha_{\mathbf m_i} = \lambda_i$, and the set of elements of $G^{\mathbf m_i}_{\lambda_i}$ has cardinality $\lambda_i$,
		
		\item[(c)] $G_{< i} := \bigcup\{G^{\mathbf m_j}_{\lambda_j} : j < i\} \cup \{0\}$,
		
		\item[(d)] $G^{\mathbf m_i}_0 := G_{< i}$,
		
		\item[(e)] $S_{\mathbf m_i} := \mathcal{T}_i = \mathcal{T}_{i, \lambda_i}$,
		
		\item[(f)] if $j < i$, then $\mathbf m_j \le \mathbf m_i$, i.e.,
		\[
		\eta \in \mathcal{T}_j \wedge \nu \in \mathcal{T}_i \wedge \eta \triangleleft \nu \implies f_{\mathbf m_j,\eta} \subseteq f_{\mathbf m_i,\nu},
		\]
		which can be depicted as:
		\[
		\xymatrix{
			& 0 \ar[r] & G^{\mathbf m_j}_{\lambda_j} \ar[r]^{\subseteq} \ar[d]_{f_{\mathbf m_j,\eta}} & G^{\mathbf m_i}_{\lambda_i} \ar[dl]^{f_{\mathbf m_i,\nu}} \\
			& & \mathbb{Z} &&
		}
		\]
		
		\item[(g)] $\langle f_{\mathbf m_i,\eta} : \eta \in \mathcal{T}_i \rangle$ is an independent subset of $\Hom(G^{\mathbf m_i}_{\lambda_i}, \mathbb{Z})$,
		
		\item[(h)] $\bigcap\{\Ker(f_{\mathbf m_i,\eta}) : \eta \in \mathcal{T}_i\} = \{0\}$,
		
		\item[(i)] for any $f \in \Hom(G^{\mathbf m_i}_{\lambda_i}, \mathbb{Z})$, there exist $\alpha < \lambda_i$ and $h \in \Hom\bigl(\prod_{\eta \in \mathcal{T}_{i,\alpha}} \mathbb{Z}, \mathbb{Z}\bigr)$ such that
	$h$ is a factorization of $f$ through
	\(
	\pi_i\in \Hom(G^{\mathbf m_i}_{\alpha_{{\mathbf m_i}}}, \mathbb{Z}^{S_{{\mathbf m_i}}}),
	\)
	defined by
	\(
	\pi_i(x) = \big( f_{\mathbf m_i,s}(x) \mid s \in S_{\mathbf m_i} \big).
	\)
	\end{enumerate}
\end{enumerate}
\begin{remark}\label{rems}
	For each $\alpha < \lambda_i$, the cardinality of $\mathcal{T}_{i,\alpha}$ is less than $\lambda_i$.
	This fact will be useful to show that
	\[
	|G^{\mathbf{m}_j}_\alpha| < \lambda_{\mathbf{m}_j} \quad \text{for } \alpha < \lambda_{\mathbf{m}_j},
	\]
	see the subsequent discussion in $(\sharp)$ below.
\end{remark}
\iffalse
For $i=0$, we set
\begin{itemize}
\item $\mathbf m_0=(\lambda_0, \langle G^{\mathbf m_0}_\alpha:\alpha \le
\lambda_0\rangle, \langle f_{\mathbf m_0,s}: s\in S_{\mathbf m_0} \rangle),
$
\item $  G^{\mathbf m_0}_\alpha={\color{blue} \prod}_{\eta \in \cT_{0, \alpha}}\mathbb{Z}x_{\eta}$,

\item $S_{\mathbf m_0}=\cT_{0, \alpha}$,
\item $\eta \in \cT_{0, \alpha}, f_{\mathbf m_0,\eta}: G^{\mathbf m_0}_\alpha \rightarrow \mathbb{Z}x_{\eta}$ is the projection map.
\end{itemize}
Note that by
L\"{o}s theorem \cite[Corollary III. 1.5]{EM02},
\[
\Hom({\color{blue} \prod}_{\eta \in \cT_{0, \alpha}}\mathbb{Z}x_{\eta}, \mathbb{Z}) \cong \bigoplus_{\eta \in \cT_{0}} \Hom(\mathbb{Z}, \mathbb{Z}) \cong \bigoplus_{\eta \in \cT_{0}} \mathbb{Z}x_\eta,
\]
from which we can easily conclude $(*)^i_{B}$(i). {\color{blue}In view of Fact
\ref{ee}, the group $G^{\mathbf m_0}_{\lambda_0}$ is $\aleph_1$-free.}

The reason we take $G^{\mathbf m_0}_{\lambda_0}$ {\color{blue}almost-}free is to make sure at the end of the construction, all our groups are at least $\lambda_0$-free, as for the next steps $i<\kappa$ of the construction, we only get a bit more than $\sum_{j<i}\lambda_j$-freeness, which for $i=0$ is not well-defined.
\fi

Assume that $i < \kappa$ and that the sequence $\langle \mathbf{m}_j : j < i \rangle$ has been defined.
Fix a diamond sequence
\[
\langle F_{i,\delta} : \delta \in S_i \rangle, \quad F_{i,\delta} : \delta \to \mathbb{Z}.
\] \begin{notation}
 	Let $\langle \beta_i(\gamma) : \gamma < \lambda_i \rangle$ be an increasing, continuous sequence of ordinals cofinal in $\lambda_i$, with $\beta_i(0) = 0$.
 \end{notation}

 We proceed by setting
 $G_{<i} := \bigcup_{j<i} G^{\mathbf{m}_j}_{\lambda_j} \cup \{0\}$. Also,
for $\eta \in \mathcal{T}_i$,  define $f_{<i,\eta}: G_{<i} \to \mathbb{Z}$ by
 	\[
 	f_{<i,\eta} := \bigcup_{j<i} f_{\mathbf{m}_j, \eta \restriction (j+1)}.
 	\]
In particular, $G_{<0} = \{0\}$ and $f_{<0,\eta}: G_{<0} \to \mathbb{Z}$ is the zero map.
We shall choose $\mathbf{m}_{i,\gamma}$ by induction on $\gamma < \lambda_i$ such that:

\begin{enumerate}
\item[$(*)^\gamma_{C}:$]
\begin{enumerate}
\item[(a)] $\mathbf m_{i,0}$ is defined as
 \begin{enumerate}
\item[$(\alpha)$] $\lh(\mathbf
  m_{i,0}) =0$,
 \item[$(\beta)$]  $\lambda_{\mathbf m_{i,0}}:=\chi+\sup_{j<i}\lambda_{\mathbf m_{j}}$  with the convention that $\sup_{j<0}\lambda_{\mathbf m_{j}}=0$
\item[$(\gamma)$]  $G^{\mathbf m_{i,0}}_0:= G_{< i}$,
\item[$(\delta)$] $S_{\mathbf m_{i,0}}:=\cT_{i,\beta_i(0)}$,
\item[$(\epsilon)$] for $\eta \in \cT_{i,\beta_i(0)}$, $f_{\mathbf m_{i, 0}, \eta}:=f_{<i, \eta}$.
\end{enumerate}
\item[(b)]  $\langle \mathbf m_{i,\gamma}: \gamma < \lambda_i \rangle$ is an increasing
 and continuous sequence from $\mathbf M_{1, \lambda_i}$ with
 $S_{ \mathbf m_{i,\gamma}}=\cT_{i,  \beta_i(\gamma)}$ and $\lh(\mathbf
  m_{i,\gamma}) = \alpha_{i,\gamma} < \lambda_i$, which means:
 \begin{enumerate}
 \item[$(\alpha)$]  if $\rho < \gamma$, then $\mathbf m_{i,\rho} \le
\mathbf m_{i,\gamma}$,

 \item[$(\beta)$] if $\gamma$ is a limit ordinal, then $\mathbf m_{i,\gamma}:=\bigcup_{\rho<\gamma} \mathbf m_{i,\rho}$. In particular, we have the following equalities:
 \begin{enumerate}
 \item[$(\beta_1)$] $\alpha_{i,\gamma}=\sup_{\rho<\gamma} \alpha_{i,\rho}$,

\item[$(\beta_2)$]   $G^{\mathbf m_{i,\gamma}}_{ \alpha_{i,\gamma}}=\bigcup_{\rho < \gamma}G^{\mathbf m_{i,\rho}}_{ \alpha_{i,\rho}}$,

\item[$(\beta_3)$] $S_{\mathbf m_{i,\gamma}}=\cT_{i,\beta_i(\gamma)}$,
\item[$(\beta_4)$]
     $f_{\mathbf m_{i,\gamma},\eta }=f_{i, <\eta} \cup \bigcup_{\rho < \gamma} f_{\mathbf m_{i,\rho}, \eta \rest \rho+1}$ for any  $\eta \in \cT_{i, \beta_i(\gamma)}$.
\end{enumerate}
\end{enumerate}

\item[(c)] If $\rho < \gamma$, and $\rho \notin S_i$, then
  $G^{\mathbf m_{i,\gamma}}_{\alpha_{i, \gamma}}/G^{\mathbf m_{i,\rho}}_{\alpha_{i, \rho}}$ is free.

\item[(d)]  $\bigcap\{\Ker(f_{\mathbf m_{i,\gamma},\eta}): \eta \in \cT_{i, \beta_i(\gamma)}\} = \{0\}$.

\item[(e)]  $G^{\mathbf m_{i,\gamma}}_{\alpha_{i, \gamma}}$ has  set of
  elements an  ordinal $\delta_i(\gamma)< \lambda_i$.

\item[(f)] Recall that $\langle F_{i,\delta} : \delta \in S_i \rangle$ denotes the diamond sequence.
We now collect the following notations and assumptions:

\begin{itemize}
	\item[$(\alpha)$] $\gamma = \alpha_{i,\gamma} \in S_i$,
	
	\item[$(\beta)$] The underlying set of $G^{\mathbf{m}_{i,\gamma}}_{\alpha_{i,\gamma}}$ is $\gamma$,
	
	\item[$(\gamma)$] $\Rang(F_{i,\gamma}) \subseteq \mathbb{Z}$ is nonzero; in particular, $\Rang(F_{i,\gamma}) = n \mathbb{Z} \cong \mathbb{Z}$ for some nonzero $n \in \mathbb{Z}$,
	
	\item[$(\delta)$] $F_{i,\gamma}$ is a homomorphism from $G^{\mathbf{m}_{i,\gamma}}_{\alpha_{i,\gamma}}$ onto $\Rang(F_{i,\gamma})$,
	
	\item[$(\epsilon)$] $F_{i,\gamma} \notin \langle f_{\mathbf{m}_{i,\gamma},\eta} : \eta \in \mathcal{T}_{i, \beta_i(\gamma)} \rangle$,
	where $\langle f_{\mathbf{m}_{i,\gamma},\eta} : \eta \in \mathcal{T}_{i, \beta_i(\gamma)} \rangle$ denotes the subgroup of $\Hom(G^{\mathbf{m}_{i,\gamma}}_{\alpha_{i,\gamma}}, \mathbb{Z})$ generated by $\{ f_{\mathbf{m}_{i,\gamma},\eta} : \eta \in \mathcal{T}_{i, \beta_i(\gamma)} \}$.
\end{itemize}

We then define $\mathbf{m}_{i,\gamma+1}$ so that $F_{i,\gamma}$ has no extension to a homomorphism from $G^{\mathbf{m}_{i,\gamma+1}}_{\alpha_{i,\gamma+1}}$ into $K_1$. Namely, we have the following commutative diagram:
  $$\xymatrix{
  	 &G^{\mathbf m_{i,\gamma}}_{\alpha_{i, \gamma}}\ar[r]^{\subseteq}\ar[d]_{F_{i,\gamma}}&G^{\mathbf m_{i,\gamma+1}}_{\alpha_{i, \gamma+1}}\ar[d]^{\nexists}\\
  	 & \Rang(F_{i,\gamma})\ar[r]^{=}&\Rang(F_{i,\gamma}),
  	&&&}$$
\end{enumerate}
\end{enumerate}where the right vertical arrow does not exist.
To simplify notation, we let $G_{i, \gamma} := G^{\mathbf{m}{i,\gamma}}{\alpha_{i,\gamma}}$. We then define $G_{i, \gamma, \rho} := G_{i, \gamma}$ for any $\rho \le \alpha_{i,\gamma}$.
Also, we abbreviate the function $f_{\mathbf{m}{i,\gamma},\eta}$ as $f_{i, \gamma, \eta}$ for any index $\eta \in \mathcal{T}_{i, \beta_i(\gamma)}$.

The starting case $\gamma = 0$ is trivial, since it can be defined as in $(*)^\gamma_C$(a).
By the induction hypothesis and the definition of $f_{i,0,\eta}$, we have:
\begin{itemize}
\item[$\circ$] The sequence $\langle f_{i,0,\eta} : \eta \in \mathcal{T}_{i, \beta_i(0)} \rangle$ is an independent subset of $\Hom(G_{i,0}, \mathbb{Z})$,
\item[$\circ$] Each $f_{i,0,\eta}$ extends $f_{<i,\eta}$,
\item[$\circ$] $\bigcap \{ \Ker(f_{i,0,\eta}) : \eta \in \mathcal{T}_{i, \beta_i(0)} \} = \{0\}$.
\end{itemize}

If $\gamma$ is a limit ordinal, set $\alpha_{i,\gamma} = \sup_{\rho < \gamma} \alpha_{i,\rho}$ and define $\mathbf{m}_{i,\gamma}$ as in $(*)^\gamma_C$(b)$(\beta)$.

\medskip
 Suppose that $\mathbf{m}_{i,\gamma}$ has already been defined.
We now proceed to define $\mathbf{m}_{i,\gamma+1}$. First, assume that one of the following cases occurs:

\begin{itemize}
	\item[$(h_1)$] $\gamma \notin S_i$ or at least one of the hypotheses $(*)^\gamma_C$(f)$(\alpha)$-$(\delta)$ fails, or
	\item[$(h_2)$] $\gamma \in S_i$, the hypotheses $(*)^\gamma_C$(i)$(\alpha)$-$(\delta)$ hold, but either
	\begin{itemize}
		\item $G_{i,\gamma}$ does not have domain $\gamma$, or
		\item $F_{i,\gamma} \notin \Hom(G_{i,\gamma}, \mathbb{Z})$, or
		\item $F_{i,\gamma} \in \langle f_{i,\gamma,\eta} : \eta \in \mathcal{T}_{i, \beta_i(\gamma)} \rangle$.
	\end{itemize}
\end{itemize}

In this case, define $\mathbf{m}_{i,\gamma+1}$ as follows:
\begin{enumerate}
	\item $\alpha_{i,\gamma+1} := \alpha_{i,\gamma}+1$,
	\item $\mathbf{m}_{i,\gamma} \leq \mathbf{m}_{i,\gamma+1}$,
	\item $S_{\mathbf{m}_{i,\gamma+1}} := \mathcal{T}_{i, \beta_i(\gamma+1)}$,
	\item $G_{i,\gamma+1} := G_{i, \gamma+1, \alpha_{i,\gamma}+1} := G_{i,\gamma} \oplus \mathbb{Z}_{[u_{i,\gamma}]}$, where $u_{i,\gamma} := \mathcal{T}_{i, \beta_i(\gamma+1)}$,
	\item For $\eta \in \mathcal{T}_{i, \beta_i(\gamma+1)}$, set $f_{i,\gamma+1,\eta} := f_{i,\gamma,\eta} \oplus \pi_\eta$, where $\pi_\eta : \mathbb{Z}_{[u_{i,\gamma}]} \to \mathbb{Z} x_\eta$ is the projection, and for $\eta \notin \mathcal{T}_{i, \beta_i(\gamma)}$, $f_{i,\gamma,\eta}$ is the zero map.
\end{enumerate}

\medskip
\noindent
Then suppose that $\mathbf{m}_{i,\gamma}$ is defined and the following holds:
\begin{itemize}
	\item[$(h_3)$] $\gamma \in S_i$, $G_{i,\gamma}$ has domain $\gamma$, hypotheses $(*)^\gamma_C$(f)$(\alpha)$-$(\delta)$ hold, $F_{i,\gamma} \in \Hom(G_{i,\gamma},\mathbb{Z})$, and $F_{i,\gamma} \notin \langle f_{i,\gamma,\eta} : \eta \in \mathcal{T}_{i, \beta_i(\gamma)} \rangle$.
\end{itemize}
In this case, we define $\mathbf{m}_{i,\gamma+1}$ so as to satisfy $(*)^\gamma_B$(i). Let $\alpha_{i,\gamma+1} := \alpha_{i,\gamma}+1$ and set
\[
G^{\mathbf{m}_{i,\gamma+1}}_\rho := G^{\mathbf{m}_{i,\gamma}}_\rho = G_{i,\gamma,\rho}, \quad \forall \rho \le \alpha_{i,\gamma}.
\]
It remains to define
\[
G_{i,\gamma+1} = G^{\mathbf{m}_{i,\gamma+1}}_{\alpha_{i,\gamma}+1} \quad \text{and} \quad f_{i,\gamma+1,\eta} = f_{\mathbf{m}_{i,\gamma+1},\eta} : G_{i,\gamma+1} \to \mathbb{Z}, \quad \eta \in \mathcal{T}_{i, \beta_i(\gamma+1)}.
\]
For each $\beta < \lambda_i$, define
\[
G^{[\beta]}_{i,\gamma} := \{ x \in G_{i,\gamma} : \eta \in \mathcal{T}_i \wedge \eta(i,1) < \beta \implies f_{i,\gamma,\eta}(x) = 0 \}.
\]
Then $\langle G^{[\beta]}_{i,\gamma} : \beta < \lambda_i \rangle$ is increasing, and since $|G_{i,\gamma}| < \lambda_i$, there exists $\beta_{i,\gamma} < \lambda_i$ such that
\[
G^{[\beta]}_{i,\gamma} = G^{[\beta_{i,\gamma}]}_{i,\gamma}, \quad \forall \beta \in (\beta_{i,\gamma}, \lambda_i).
\]
Set
\[
K_{i,\gamma}
:= \Rang\!\bigl(F_{i,\gamma}\restriction G^{[\beta_{i,\gamma}]}_{i,\gamma}\bigr),
\]
and note that
\[
\chi + \sum_{j<i}\lambda_j
+ \bigl|\mathcal{T}_{i,\beta_i(\gamma+1)}\bigr|
+ \aleph_0
< \lambda_i.
\]
Consequently,
\[
\chi + \sum_{j<i}\lambda_j
+ \bigl|\mathcal{T}_{i,\beta_i(\gamma+1)}\bigr|
+ \aleph_0
\leq \mu_i.
\]
We set $\mu_{i,\gamma}:=\mu_i$.
Combining Lemma~\ref{e31} with assumption~\ref{e25}(d), we obtain
$\mu_{i,\gamma}\in\mathcal{K}$. Using Notation~\ref{nzu}, define
\[
(K_{i,\gamma})_{[u]}
:= \bigoplus_{\alpha\in u} K_{i,\gamma}x_\alpha
\quad\text{for any index set }u.
\]
Also, we define
\(
(H_*,\phi_*)
:= \bigl(H_{K_{i,\gamma}},
\phi_{K_{i,\gamma}}\bigr),
\)
so that
\begin{itemize}
	\item[$\circ$] $H_*$ is a free abelian group of size $\mu_{i,\gamma}$ extending $(K_{i,\gamma})_{[u_{i,\gamma}]}$,
	\item[$\circ$] $\phi_* : u_{i,\gamma} \to K_{i,\gamma} \setminus \{0\}$,
	\item[$\circ$] $H_*/(K_{i,\gamma})_{[\mu_{i,\gamma}]}$ is $\mu_{i,\gamma}$-free,
	\item[$\circ$] there is no homomorphism $f : H_* \to K_{i,\gamma}$ such that $f(x_\eta) = \phi_*(\eta)$ for all $\eta \in \mathcal{T}_{i, \beta_i(\gamma+1)}$.
\end{itemize}

\medskip
Finally, for each $\beta$ with $\beta_{i,\gamma} \le \beta < \lambda_i$ and $b \in K_{i,\gamma} = \Rang(F_{i,\gamma} \restriction G^{[\beta_{i,\gamma}]}_{i,\gamma})$, there exists $y_{b,\beta} \in G^{[\beta_{i,\gamma}]}_{i,\gamma} \subseteq G_{i,\gamma}$ such that
\begin{enumerate}
	\item[$(*)_{\beta, b}$]
	\begin{enumerate}
		\item If $\eta \in \cT_i$ and $\eta(i,1) < \beta$, then $f_{i,\gamma,\eta}(y_{b,\beta}) = 0$,
		\item $F_{i,\gamma}(y_{b,\beta}) = b$.
	\end{enumerate}
\end{enumerate}

\medskip
Since $|G_{i,\gamma}| < \lambda_i$, for each $b \in K_{i,\gamma}$ as above, there exists some fixed $y_b \in G_{i,\gamma}$ such that the set
\[
X_b := \{\beta < \lambda_i : \beta_{i,\gamma} \le \beta \text{ and } y_{b,\beta} = y_b \}
\]
is stationary in $\lambda_i$.

The assignment $x_\eta \mapsto y_{\phi_*(\eta)}$ induces a morphism
\[
g_{i,\gamma} : (K_{i,\gamma})_{[u_{i,\gamma}]} \longrightarrow G_{i,\gamma}.
\]
Recall that $u_{i,\gamma} = \mathcal{T}_{i,\beta_i(\gamma+1)}$ and that
\[
\id : (K_{i,\gamma})_{[u_{i,\gamma}]} \longrightarrow H_{\mathbf k_{i,\gamma}, K_{i,\gamma}}
\]
denotes the natural inclusion.

\begin{itemize}
	\item[$(\sharp)$] Define
	\[
	G_{i,\gamma+1} := \frac{G_{i,\gamma} \times H_{\mathbf k_{i,\gamma},K_{i,\gamma}}}
	{\big\langle (g_{i,\gamma}(k), -\id(k)) : k \in (K_{i,\gamma})_{[u_{i,\gamma}]} \big\rangle}.
	\]
\end{itemize}

Recall that $H_{\mathbf k_{i,\gamma},K_{i,\gamma}}$ is of size
$
\mu_{i,\gamma}  < \lambda_i.
$
Combining this with an inductive argument along with $(\ast)$, we conclude that $|G_{i,\gamma+1}| < \lambda_i$.

\begin{notation}
	For $g \in G_{i,\gamma}$ and $h \in H_{\mathbf k_{i,\gamma},K_{i,\gamma}}$, let $[(g,h)] \in G_{i,\gamma+1}$ denote the equivalence class of $(g,h)$.
\end{notation}

By the definition of the push-out construction, this gives us two maps
\[
h_{i,\gamma}: G_{i,\gamma} \longrightarrow G_{i,\gamma+1}, \qquad
k_{i,\gamma}: H_{\mathbf k_{i,\gamma}, K_{i,\gamma}} \longrightarrow G_{i,\gamma+1},
\]
such that
$
h_{i,\gamma} \circ g_{i,\gamma} = k_{i,\gamma}.
$ In fact $h_{i,\gamma}$ is defined by
$
h_{i,\gamma}(x) := [(x,0)].
$
The situation is depicted in the following commutative diagram:
\[
\xymatrix{
	& H_{\mathbf k_{i, \gamma}, K_{i, \gamma}} \ar[rr]^{k_{i, \gamma}} && G_{i, \gamma+1} \\
	& (K_{i, \gamma})_{[u_{i, \gamma}]} \ar[rr]^{g_{i, \gamma}} \ar[u]^{\id} && G_{i, \gamma} \ar[u]_{h_{i, \gamma}}
}
\]

\begin{claim}
	\label{ntrick}
	Let $g \in G_{i,\gamma}$ and $h \in (K_{i,\gamma})_{[u_{i,\gamma}]}$. Then there exists an element
	$\tilde{g} \in G_{i,\gamma}$ such that
	$
	[(g,h)] = [(\tilde{g},0)]
	$
	in $G_{i,\gamma+1}$.
\end{claim}

\begin{PROOF}{\ref{ntrick}}
	Set
	$
	\tilde{g} := g + g_{i,\gamma}(h) \in G_{i,\gamma}.
	$
	Then
	\[
	(g,h) - (\tilde{g},0)
	= (g - \tilde{g},\, h)
	= (-g_{i,\gamma}(h),\, h)
	\in
	\Big\langle \big( g_{i,\gamma}(k), -\id(k) \big) : k \in (K_{i,\gamma})_{[u_{i,\gamma}]} \Big\rangle.
	\]
	Hence $[(g,h)] = [(\tilde{g},0)]$ in $G_{i,\gamma+1}$.
\end{PROOF}

Clearly, the map $h_{i,\gamma}: G_{i,\gamma} \to G_{i,\gamma+1}$ is an embedding, and  we may identify
$
G_{i,\gamma} \subseteq G_{i,\gamma+1}
$
via $h_{i,\gamma}$.
Let $g \in G_{i,\gamma}$ and $h \in H_{\mathbf k_{i,\gamma},K_{i,\gamma}}$, and define
\[
\psi : G_{i,\gamma+1} \longrightarrow
H_{\mathbf k_{i,\gamma},\, K_{i,\gamma}} / (K_{i,\gamma})_{[u_{i,\gamma}]},
\quad
\psi([(g,h)]) := h + (K_{i,\gamma})_{[u_{i,\gamma}]}.
\]
As consequences of well-known general
facts, $\psi$ is well-defined, surjective, and also
\[
\ker(\psi)
= \{[(g,h)] : g \in G_{i,\gamma},\, h \in (K_{i,\gamma})_{[u_{i,\gamma}]} \}
\stackrel{(\ref{ntrick})}{=}
\{[(\tilde{g},0)] : g \in G_{i,\gamma}\}
\cong G_{i,\gamma}.
\]
So,
\(
G_{i,\gamma+1}/G_{i,\gamma}
\;\cong\;
H_{\mathbf k_{i,\gamma},\, K_{i,\gamma}} / (K_{i,\gamma})_{[u_{i,\gamma}]}.
\)
Since the latter quotient is $\mu_{i,\gamma}$-free by assumption, the same holds for
$G_{i,\gamma+1}/G_{i,\gamma}$.
The next key observation is the non-extendability property of $F_{i,\gamma}$. Suppose, toward a contradiction, that
$F: G_{i,\gamma+1} \to K_{i,\gamma}$ extends $F_{i,\gamma}$. Then the map
\[
f := F \circ k_{i,\gamma} : H_{\mathbf k_{i,\gamma}, K_{i,\gamma}} \longrightarrow K_{i,\gamma}
\]
satisfies
\[
f(x_\eta) = F \circ k_{i,\gamma}(x_\eta) = F_{i,\gamma} \circ g_{i,\gamma}(x_\eta) = \phi_*(\eta)
\]
for all $\eta \in u_{i,\gamma}$, contradicting the choice of $(H_{\mathbf k_{i,\gamma},K_{i,\gamma}}, \phi_*)$.

\medskip
\noindent
Next, we define the map $f_{i,\gamma+1,\eta}$. Let $\eta \in \cT_{i,\beta_i(\gamma+1)}$. For any
$h \in H_{\mathbf k_{i,\gamma},K_{i,\gamma}}$ and $g \in G_{i,\gamma}$, set
\[
f_{i,\gamma+1,\eta}([(h,g)]) := f_{i,\gamma,\eta}(g).
\]
This defines a homomorphism
\[
f_{i,\gamma+1,\eta} = f_{\mathbf m_{i,\gamma+1},\eta} : G_{i,\gamma+1} \longrightarrow \mathbb{Z}.
\]
To verify that $f_{i,\gamma+1,\eta}$ is well-defined, it suffices to check that
\(
f_{i,\gamma,\eta} \circ g_{i,\gamma} = 0.
\)
Indeed, for a given $\eta \in \cT_{i,\beta_i(\gamma+1)}$, choose $\beta \in X_{\phi_*(\eta)}$ with
$\eta(i,1) < \beta$. Then by $(*)_{\beta,\phi_*(\eta)}$(a),
\[
f_{i,\gamma,\eta} \circ g_{i,\gamma}(x_\eta)
= f_{i,\gamma,\eta}(y_{\phi_*(\eta)})
= f_{i,\gamma,\eta}(y_{\phi_*(\eta),\beta})
= 0.
\]

\medskip
\noindent
Also, the family
\(
\{f_{i,\gamma+1,\eta} : \eta \in \cT_{i,\beta_i(\gamma+1)}\}
\)
is independent, and
\[
\bigcap_{\eta \in \cT_{i,\beta_i(\gamma+1)}} \Ker(f_{i,\gamma+1,\eta}) = \{0\}.
\]
\medskip
\noindent
Finally, for $\eta \in \cT_i$, define
\[
f_{i, \eta} := \bigcup_{\gamma < \lambda_i} f_{i, \gamma, \eta}.
\]
We are now prepared to state the following claim.

\begin{claim}\label{claim2}
	The set $\{ f_{i,\eta} : \eta \in \mathcal{T}_i \}$ generates
	$\Hom(G^{\mathbf{m}_i}_{\lambda_i}, \mathbb{Z})$.
\end{claim}

\begin{PROOF}{\ref{claim2}}
	Suppose, toward a contradiction, that there exists
	\[
	f \in \Hom(G^{\mathbf{m}_i}_{\lambda_i}, \mathbb{Z}) \setminus \langle f_{i,\eta} : \eta \in \mathcal{T}_i \rangle.
	\]
	Choose $\gamma \in S$ such that $G_{i,\gamma}$ has domain $\gamma$, $f \restriction \gamma = F_{i,\gamma}$, and
	\[
	f \restriction \gamma \notin \langle f_{i,\gamma,\eta} : \eta \in \mathcal{T}_i \rangle.
	\]
By construction, the map $f\restriction \gamma$ cannot be extended to a homomorphism
from $G_{i,\gamma+1}$ to $\mathbb{Z}$, yielding a contradiction.
	\end{PROOF}

We also note that clause $(\ast)^\gamma_B$(i) holds by Claim~\ref{claim2}. Indeed, given any
\(
f \in \Hom(G^{\mathbf{m}_i}_{\lambda_i}, \mathbb{Z}),
\)
there exist $\eta_0, \dots, \eta_{n-1} \in \mathcal{T}_i$ and coefficients
$\alpha_0, \dots, \alpha_{n-1} \in \mathbb{Z}$ such that
\(
f = \sum_{k=0}^{n-1} \alpha_k \, f_{i,\eta_k}.
\)
Pick $\alpha < \lambda_i$ large enough so that
$\eta_k(i,1) \leq \alpha$ for all $k<n$.
By Fact~\ref{los}(a), for each $i<\kappa$ we have
\[
\Hom\!\left(\prod_{\cT_{i,\alpha}} \mathbb{Z},\, \mathbb{Z}\right)
\cong \bigoplus_{\eta \in \cT_{i,\alpha}} \Hom(\mathbb{Z},\mathbb{Z})
\cong \bigoplus_{\eta \in \cT_{i,\alpha}} \mathbb{Z}x_\eta.
\]
In particular, there exists
\( h \in \Hom(\prod_{\cT_{i,\alpha}}\mathbb{Z}, \mathbb{Z}) \)
such that
\[
h\big(\langle f_{i,\eta} : \eta \in \cT_i \rangle\big)
= \sum_{k<n} \alpha_k f_{i,\eta_k}.
\]

\medskip
\noindent
Clause $(\ast)^\gamma_C$(b) follows from the fact that $S_i$ is non-reflecting. Hence there exists a club
$C \subseteq \gamma$ with $\min(C)=\rho$ such that
$C \cap S_i = \emptyset$. Then
\( G_{i,\gamma}/G_{i,\rho} \)
is the union of the increasing and continuous sequence
\(
\langle G_{i,\tau}/G_{i,\rho} : \tau \in C \rangle
\).
By the induction hypothesis, each
\( G_{i,\tau}/G_{i,\mu} \)
is free for all $\mu<\tau$ in $C$, and therefore
$G_{i,\gamma}/G_{i,\rho}$ is free as well.

\medskip
Having defined the sequence $\langle \mathbf{m}_{i,\gamma} : \gamma < \lambda_i \rangle$ with properties $(*)^\gamma_C$, we set
\[
\mathbf{m}_i := \bigcup_{\gamma < \lambda_i} \mathbf{m}_{i,\gamma}.
\]
This completes the inductive construction of $\langle \mathbf{m}_i : i < \kappa \rangle$ as required by $(*)^i_B$.

\medskip
\noindent

\underline{Stage (C):} In this step, we show that for each $i$,
$\mathbf{m}_i \in \mathbf{M}_{2,\lambda_i}$ (see Definition~\ref{e11}(2)).

Items (a)--(e) of Definition~\ref{e11}(1) and the equalities
$\alpha_{\mathbf{m}_i} = \lambda_i = \lambda_{\mathbf{m}_i}$ are immediate.

\medskip
\noindent
For clause (i), let $\alpha < \lambda_i$ and $0 \neq G' \subseteq G^{\mathbf{m}_i}_\alpha$.
Then there exists $\gamma < \lambda_i$ such that
$G' \subseteq G^{\mathbf{m}_{i,\gamma}}_{\alpha_{i,\gamma}}$.
Fix $0 \neq x \in G'$. By property $(*)^\gamma_C$(d),
\[
\bigcap \{ \Ker(f_{i,s}) : s \in \mathcal{T}_i \} = \{0\}.
\]
Hence there exists $s \in \mathcal{T}_i$ with $f_{i,s}(x) \neq 0$.
In particular, the restriction $f_{i,s}\restriction G' \in \Hom(G',\mathbb{Z})$ is non-zero,
as required.

\medskip
\noindent
\underline{Stage (D)}: In this stage we conclude the proof of Theorem \ref{e25}.
For each $i<\kappa$, set
\[
G_i := G_{\mathbf{m}_i, \lambda_i},
\qquad
G_{<i} := \bigcup_{j<i} G_j .
\]
Then $\langle G_i : i<\kappa \rangle$ is increasing and continuous, and for each $i$ the quotient
$G_i / G_{<i}$
is $(\chi + \sum_{j<i} \lambda_j)$-free.
Indeed, for all $\gamma < \lambda_i$ we have $\mu_{i,\gamma} \ge \chi + \sum_{j<i}\lambda_j$, and each $G_{i,\gamma+1}/G_{i,\gamma}$ is $\mu_{i,\gamma}$-free.

Define
\[
G := \bigcup_{i<\kappa} G_i.
\]
Then $G$ is an abelian group of size $\lambda$.
We first show that $G$ is $\chi$-free.
To this end, let $H \leq G$ be a subgroup of size $<\chi$.
Then the sequence $$\langle H \cap G_i : i<\kappa \rangle$$ is increasing and continuous.
For each $i<\kappa$,
\[
(H \cap G_i)/(H \cap G_{<i})
\cong
((H \cap G_i)+G_{<i})/G_{<i},
\]
and this group is free since $G_i/G_{<i}$ is $(\chi+\sum_{j<i}\lambda_j)$-free, and therefore $\chi$-free.
Hence
$H = \bigcup_{i<\kappa}(H \cap G_i)$
is free.

\medskip

Next, let $H \le G$ be a nonzero subgroup of size $<\lambda$.
Choose $i<\kappa$ such that $H \cap G_{<i} \neq \{0\}$ and $|H|<\lambda_i$.
By Definition~\ref{e11}(2)(i),
\[
\Hom(H \cap G_i,\mathbb Z)\neq 0.
\]
Moreover,
\[
(H \cap G_i)/(H \cap G_{<i})
\cong
((H \cap G_i)+G_{<i})/G_{<i}
\]
is free, and therefore $\Hom(H,\mathbb Z)\neq 0$.

\medskip

Finally, we show that $\Hom(G,\mathbb Z)=0$.
Suppose, towards a contradiction, that
$0 \neq f \in \Hom(G,\mathbb Z)$.
By $(\ast)^i_B$(i), for each $i<\kappa$ there exist $\alpha_i<\lambda_i$ and
\[
h_i \in \Hom\Bigl( \prod_{\mathcal{T}_{i,\alpha_i}} \mathbb{Z}, \mathbb{Z} \Bigr)
\]
such that
\[
f(x) = h_i\bigl( \langle f_{\mathbf{m}_i,\eta}(x) : \eta\in\mathcal{T}_{i,\alpha_i}\rangle \bigr),
\qquad x\in G_i.
\]
By Fact~\ref{los}(a),
\[
\Hom\Bigl(\prod_{\mathcal{T}_{i,\alpha_i}}\mathbb Z,\mathbb Z\Bigr)
\cong
\bigoplus_{\eta\in\mathcal{T}_{i,\alpha_i}} \mathbb Z x_\eta.
\]

By \cite[Corollary III.3.3]{EM02}, for each $i$ there exists a finite set $u_i\subseteq\mathcal{T}_{i,\alpha_i}$ such that
\[
f_{\mathbf{m}_i,\eta}(x)=0 \ \text{for all}\ \eta\in u_i \ \Longrightarrow\ f(x)=0,
\qquad x\in G_i.
\]
Since $\kappa=\cf(\kappa)>\aleph_0$, there exists $n_*$ such that the set
\[
\mathcal{V}_1 := \{ i<\kappa : |u_i| = n_* \}
\]
is unbounded in $\kappa$.

For $i<j<\kappa$, define
\[
\pr_{i,j} : \mathcal{T}_j \to \mathcal{T}_i,
\qquad
\pr_{i,j}(\eta) = \eta\restriction(i+1).
\]
If $\eta\in u_j$, then $\eta\restriction(i+1)\in u_i$, since $G_i \subseteq G_j$ and
$f_{\mathbf{m}_i,\,\eta\restriction(i+1)} \subseteq f_{\mathbf{m}_j,\eta}$.
Thus $\pr_{i,j}$ maps $u_j$ onto $u_i$.

Let $\mathcal{T}$ be the tree of height $\kappa$ whose $i$-th level is $u_i$.
This is a well-defined tree of uncountable height with finite levels.
By Fact~\ref{kurepa}, $\mathcal{T}$ carries a cofinal branch, contradicting Claim~\ref{nobranch}.
\end{proof}

\section*{Acknowledgements}
The authors are grateful to the referees for a very careful reading of the paper and for
helpful comments that have improved both its clarity and presentation.

\end{document}